\providecommand{\U}[1]{\protect\rule{.1in}{.1in}}
\providecommand{\U}[1]{\protect\rule{.1in}{.1in}}
\newtheorem{theorem}{Theorem}
\newtheorem{corollary}[theorem]{Corollary}
\newtheorem{definition}[theorem]{Definition}
\newtheorem{lemma}[theorem]{Lemma}
\newtheorem{proposition}[theorem]{Proposition}
\newtheorem{remark}[theorem]{Remark}
\newenvironment{proof}[1][Proof]{\textbf{#1.} }{\rule{0.5em}{0.5em}}
\begin{document}

\title{On isolated strata of p-gonal Riemann surfaces in the branch locus of
moduli spaces}
\author{Gabriel Bartolini\\{\small Matematiska institutionen, }\\{\small Link\"{o}pings universitet, }\\{\small 581 83 Link\"{o}ping, Sweden.}
\and Antonio F. Costa\thanks{Partially supported by MTM2011-23092}\\{\small Dept. Matem\'{a}ticas Fundamentales, }\\{\small Facultad de Ciencias, UNED, }\\{\small 28040 Madrid, Spain.}
\and Milagros Izquierdo\thanks{Partially supported by the Royal Swedish Academy of Sciences. Work done during a visit to the Institut Mittag-Leffler (Djursholm, Sweden)}\\{\small Matematiska institutionen, }\\{\small Link\"{o}pings universitet, }\\{\small 581 83 Link\"{o}ping, Sweden.}}
\date{}
\maketitle

\begin{abstract}
The moduli space $\mathcal{M}_{g}$ of compact Riemann surfaces of genus $g$
has orbifold structure, and the set of singular points of such orbifold is the
\textit{branch locus} $\mathcal{B}_{g}$. Given a prime number $p \ge 7$, $\mathcal{B}_{g}$ contains isolated strata
consisting of $p$-gonal Riemann surfaces for genera $g \ge {3(p-1)\over 2}$,that are multiple of ${p-1 \over 2}$. 
This is a generalization of the results obtained in \cite{[BCI1]} for pentagonal Riemann surfaces, and the results of \cite{[K]} and \cite{[CI3]} for zero- and one-dimensional isolated strata in the branch locus.

\end{abstract}

\section{Introduction}

In this article we study the topology of moduli spaces of Riemann surfaces.
The moduli space $\mathcal{M}_{g}$ of compact Riemann surfaces of genus $g$, 
being the quotient of the Teichm\"{u}ller space by the discontinuous action of
the mapping class group, has the structure of a complex orbifold, whose set of
singular points is called the \textit{branch locus} $\mathcal{B}_{g}$. The
branch locus $\mathcal{B}_{g}$, $g\geq3$ consists of the Riemann surfaces with
symmetry, i. e. Riemann surfaces with non-trivial automorphism group. 
$\mathcal{B}_{g}$ admits an (equisymmetric) stratification where each stratum is given by the symmetry 
of the surfaces in it, i.e. the conjugacy class in the mapping class group of the automorphism group of the surfaces of the stratum (\cite{[B]}).

 Our goal
is to study the topology of $\mathcal{B}_{g}$ through its connectedness, using this equisymmetric stratification.
The connectedness of moduli spaces of hyperelliptic, $p-$gonal and real
Riemann surfaces has been widely studied, for instance by \cite{[BSS]}, \cite{[K]}
\cite{[CI1]}, \cite{[CI2]}, \cite{[CI3]}, \cite{[BCI2]}, \cite{[G]}, \cite{[S]}, \cite{[BCIP]} and \cite{[BEMS]}.

Recently Bartolini, Costa and Izquierdo have shown that $\mathcal{B}_{g}$ is connected 
only for genera 3, 4, 7, 13, 17, 19 and 59 (see \cite{[BCI1]} and \cite{[BCI2]}). 
The authors found isolated strata in $\mathcal{B}_g$ 
($g \neq 3, 4, 7, 13, 17, 19, 59$) given by actions of order five and seven. 
In \cite{[BI]} it is shown that the strata induced by actions of order two and three 
belongs to the same connected component of $\mathcal{B}_{g}$. 

A cyclic $p$-gonal Riemann surface $X$ is a surface that admits a regular covering of 
degree $p$ on the Riemann sphere. A 2-gonal Riemann surface is called an hyperelliptic Riemann surface.

The main result in this article is that $\mathcal{B}_{g}$ 
contains isolated strata consisting of $p$-gonal Riemann surfaces ($p\ge 7$) of dimension $d\ge 2$ for genus 
$g=(d+1)({p-1\over 2})$,  
according to Riemann-Hurwitz's formula. 

Given two Riemann surfaces $X_1$ and $X_2$, there is a path of quasiconformal deformations taking $X_1$ to $X_2$ since 
$\mathcal{M}_{g}$ is connected. The result obtained in this article says that if $X_1$ belongs to one of the isolated strata and $X_2$ has another type of symmetry, then the path of quasiconformal deformations must contain surfaces without symmetry.
 
The main result is a generalization of the results obtained 
in \cite{[BCI1]} for isolated strata of cyclic pentagonal Riemann surfaces, and of the results in \cite{[K]}, \cite{[CI3]} for isolated strata of dimension zero and one. 
As a consequence we give an infinite family of genera for which $\mathcal{B}_{g}$ has an increasing number of 
isolated strata.

%%%%%%%%%%%%%%%%%%%%%%%%%%%%%%%%%%%%%%%%%%%%%%%%%%%%%%%%%%%%%%%%%%%%%%%%%%
%%%%%%%%%%%%%%%%%%%%%%%%%%%%%%%%%%%%%%%%%%%%%%%%%%%%%%%%%%%%%%%%%%%%%%%%%%

\section{Riemann surfaces and Fuchsian groups}

Let $X$ be a Riemann surface and assume that $Aut(X)\neq\{1\}$. Hence
$X/Aut(X)$ is an orbifold and there is a Fuchsian group $\Gamma\leq
Aut(\mathcal{D})$, such that $\pi_{1}(X)\vartriangleleft\Gamma$:%
\[
\mathcal{D\rightarrow}X=\mathcal{D}/\pi_{1}(X)\rightarrow X/Aut(X)=\mathcal{D}%
/\Gamma
\]
where $\mathcal{D}=\{z\in\mathbb{C}:\left\Vert z\right\Vert <1\}$.

If the Fuchsian group $\Gamma$ is isomorphic to an abstract group with
canonical presentation%

\begin{equation}
\left\langle a_{1},b_{1},\dots,a_{g},b_{g},x_{1}\dots x_{k}|x_{1}^{m_{1}%
}=\dots=x_{k}^{m_{k}}=\prod_{i=1}^{k}x_{i}\prod_{i=1}^{g}[a_{i},b_{i}%
]=1\right\rangle ,\label{presentation}%
\end{equation}
we say that $\Gamma$ has \emph{signature}
\begin{equation}
s(\Gamma)=(g;m_{1},\dots,m_{k}).\label{sign}%
\end{equation}
\noindent The generators in presentation (\ref{presentation}) will be called
\textit{canonical generators}.

Let $X$ be a Riemann surface uniformized by a surface Fuchsian group
$\Gamma_{g}$, i.e. a group with signature $(g;-)$. A finite group $G$ is a
group of automorphisms of $X$, i.e. there is a holomorphic action $a$ of $G$
on $X$, if and only if there is a Fuchsian group $\Delta$ and an epimorphism
$\theta_{a}:\Delta\rightarrow G$ such that $\ker\theta_{a}=\Gamma_{g}$. The
epimorphism $\theta_{a}$ is the monodromy of the covering $f_{a}:X\rightarrow
X/G=\mathcal{D}/\Delta$.

The relationship between the signatures of a Fuchsian group and subgroups is
given in the following theorem of Singerman:

\begin{theorem}
\label{subgroupthm} (\cite{[Si1]}) Let $\Gamma$ be a Fuchsian group with
signature (\ref{sign}) and canonical presentation (\ref{presentation}). Then
$\Gamma$ contains a subgroup $\Gamma^{\prime}$ of index $N$ with signature
\[
s({\Gamma}^{\prime})=(h;m_{11}^{\prime},m_{12}^{\prime},...,m_{1s_{1}}
^{\prime},...,m_{k1}^{\prime},...,m_{ks_{k}}^{\prime}).
\]
if and only if there exists a transitive permutation representation
$\theta:\Gamma\rightarrow\Sigma_{N}$ satisfying the following conditions:

1. The permutation $\theta(x_{i})$ has precisely $s_{i}$ cycles of lengths
less than $m_{i}$, the lengths of these cycles being $m_{i}/m_{i1}^{\prime
},...,m_{i}/m_{is_{i}}^{\prime}$.

2. The Riemann-Hurwitz formula
\[
\mu(\Gamma^{\prime})/\mu(\Gamma)=N.
\]
\noindent where $\mu(\Gamma),\ \mu(\Gamma^{\prime})$ are the hyperbolic areas
of the surfaces $\mathcal{D}/\Gamma$, $\mathcal{D}/\Gamma^{\prime}$.
\end{theorem}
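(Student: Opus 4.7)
The plan is to prove the theorem by exploiting the standard bijection between subgroups of index $N$ in $\Gamma$ and transitive permutation representations of $\Gamma$ on $N$ points, then reading off the signature of $\Gamma'$ from the local (cycle) data of the representation together with the global (area) data of Riemann--Hurwitz.

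For the forward direction, given $\Gamma' \le \Gamma$ of index $N$, I would let $\theta : \Gamma \to \Sigma_N$ be the (transitive) permutation representation on the left coset space $\Gamma/\Gamma'$, after fixing an enumeration in which $\Gamma'$ is the stabilizer of the point $1$. The main step is to describe the elliptic periods of $\Gamma'$ in terms of $\theta$. Each elliptic element of $\Gamma'$ is conjugate in $\Gamma$ to a power of some canonical elliptic generator $x_i$, so it suffices to analyze, for each $i$, which conjugates of powers of $x_i$ lie in $\Gamma'$. If $c_j\Gamma'$ ($1 \le j \le s$) are representatives of the $\langle x_i\rangle$-orbits on $\Gamma/\Gamma'$ and the orbit containing $c_j\Gamma'$ has length $\ell_j$, then $c_j^{-1} x_i^{\ell_j} c_j$ is a generator of $c_j^{-1}\langle x_i\rangle c_j \cap \Gamma'$ and has order $m_i/\ell_j$. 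Hence orbits of full length $\ell_j = m_i$ contribute trivially, while orbits of length $\ell_j < m_i$ contribute a maximal elliptic subgroup of $\Gamma'$ of order $m_{ij}' = m_i/\ell_j$; this produces exactly the periods listed in condition~1. Once the elliptic periods are known, the genus $h$ of $\Gamma'$ is forced by $\mu(\Gamma') = N\mu(\Gamma)$ via the formula $\mu = 2\pi(2g - 2 + \sum (1 - 1/m_i))$, giving condition~2.

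For the converse, given a transitive $\theta : \Gamma \to \Sigma_N$ satisfying (1) and (2), I set $\Gamma' = \theta^{-1}(\mathrm{Stab}(1))$, which is automatically a subgroup of index $N$ and, as an intersection of a Fuchsian group with a finite-index subgroup, is itself Fuchsian. The same cycle analysis as above shows that its elliptic periods are precisely the $m_{ij}'$ prescribed by (1): each cycle of $\theta(x_i)$ of length $\ell < m_i$ contributes a maximal elliptic subgroup of order $m_i/\ell$, and every maximal elliptic subgroup of $\Gamma'$ arises this way because all torsion in $\Gamma$ is conjugate into some $\langle x_i\rangle$. The hyperbolic area of $\Gamma'$ is then computed from these periods, and condition~(2) pins down the orbit genus to be exactly $h$, so $\Gamma'$ has the claimed signature.

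The main obstacle I expect is the bookkeeping in the cycle-structure step: one must verify that distinct $\langle x_i\rangle$-orbits yield \emph{non-conjugate} maximal elliptic subgroups of $\Gamma'$ (so no period is double-counted), and that no elliptic element of $\Gamma'$ is missed. The former uses the fact that any two distinct maximal elliptic subgroups of a Fuchsian group intersect trivially, so the $s_i$ orbits for a given $x_i$ give genuinely independent conjugacy classes in $\Gamma'$; the latter uses the Fuchsian-group fact that every elliptic element of $\Gamma$ lies in some conjugate of some $\langle x_i\rangle$. Handling these two points cleanly is the technical core; once done, Riemann--Hurwitz turns into a routine check that ties together the local cycle data and the global index.
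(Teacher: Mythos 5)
The paper does not prove this statement; it is quoted verbatim as Singerman's theorem with a citation to \cite{[Si1]}, so there is no internal proof to compare against. Your argument is, in outline, exactly Singerman's original one: the bijection between index-$N$ subgroups and transitive coset representations, the identification of the periods of $\Gamma'$ with the short cycles of $\theta(x_i)$ via the stabilizer computation $c_j^{-1}\langle x_i\rangle c_j\cap\Gamma'=\langle c_j^{-1}x_i^{\ell_j}c_j\rangle$, and Riemann--Hurwitz to pin down the orbit genus. The two bookkeeping points you flag are the right ones, and both close in the standard way: every elliptic element of $\Gamma$ is conjugate into some $\langle x_i\rangle$, and since a maximal elliptic cyclic subgroup of a Fuchsian group is the full stabilizer of its fixed point (hence self-normalizing in $\Gamma$), distinct $\langle x_i\rangle$-orbits on $\Gamma/\Gamma'$ correspond bijectively to the cone points of $\mathcal{D}/\Gamma'$ lying over the $i$-th cone point of $\mathcal{D}/\Gamma$, so no period is double-counted or missed. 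The sketch is correct.
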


For $\mathcal{G}$, an abstract group isomorphic to all the Fuchsian groups of
signature $s=(h;m_{1},...,m_{k})$, the Teichm\"{u}ller space of Fuchsian
groups of signature $s$ is:

\begin{center}
$\{\rho:\mathcal{G}\rightarrow PSL(2,\mathbb{R}):s(\rho(\mathcal{G}))=s\}/$
conjugation in $PSL(2,\mathbb{R})$ \ = \ $T_{s}$.
\end{center}

The Teichm\"{u}ller space $T_{s}$ is a simply-connected complex manifold of
dimension $3g-3+k$. The modular group, $M(\Gamma)$, of $\Gamma$, acts on
$T(\Gamma)$ as $[\rho]\rightarrow\lbrack\rho\circ\alpha]$ where $\alpha\in
M(\Gamma)$. The moduli space of $\Gamma$ is the quotient space $\mathcal{M}%
(\Gamma)=T(\Gamma)/M(\Gamma)$, then $\mathcal{M}(\Gamma)$ is a complex
orbifold and its singular locus is $\mathcal{B}(\Gamma)$, called the branch
locus of $\mathcal{M}(\Gamma)$. If $\Gamma_{g}$ is a surface Fuchsian group,
we denote $\mathcal{M}_{g}=T_{g}/M_{g}$ and the branch locus by $\mathcal{B}%
_{g}$. The branch locus $\mathcal{B}_{g}$ consists of surfaces with
non-trivial symmetries for $g>2$.

If $X/Aut(X)=\mathcal{D}/\Gamma$ and genus$(X)=g$, then there is a natural
inclusion $i:T_{s}\rightarrow T_{g}:[\rho]\rightarrow\lbrack\rho^{\prime}]$, where

\begin{center}
$\rho:\mathcal{G}\rightarrow PSL(2,\mathbb{R})$, $\pi_{1}(X)\subset
\mathcal{G}$, $\rho^{\prime}=\rho\mid_{\pi_{1}(X)}:\pi_{1}(X)\rightarrow
PSL(2,\mathbb{R})$.
\end{center}

If we have $\pi_{1}(X)\vartriangleleft\mathcal{G}$, then there is a
topological action of a finite group $G=\mathcal{G}/\pi_{1}(X)$ on surfaces of
genus $g$ given by the inclusion $a:\pi_{1}(X)\rightarrow\mathcal{G}$. This
inclusion $a:\pi_{1}(X)\rightarrow\mathcal{G}$ produces $i_{a}(T_{s})\subset
T_{g}$.

The image of $i_{a}(T_{s})$ by $T_{g}\rightarrow\mathcal{M}_{g}$ is
$\overline{\mathcal{M}}^{G,a}$, where $\overline{\mathcal{M}}^{G,a}$ is the
set of Riemann surfaces with automorphisms group containing a subgroup acting
in a topologically equivalent way to the action of $G$ on $X$ given by the
inclusion $a$, see \cite{[H]}, the subset $\mathcal{M}^{G,a}\subset
\overline{\mathcal{M}}^{G,a}$ is formed by the surfaces whose full group of
automorphisms acts in the topological way given by $a$. The branch locus,
$\mathcal{B}_{g}$, of the covering $T_{g}\rightarrow\mathcal{M}_{g}$ can be
described as the union $\mathcal{B}_{g}=\bigcup_{G\neq\{1\}}\overline
{\mathcal{M}}^{G,a}$, where $\{\mathcal{M}^{G,a}\}$ is the equisymmetric
stratification of the branch locus \cite{[B]}:

\begin{theorem}
(\cite{[B]}) Let $\mathcal{M}_{g}$ be the moduli space of Riemann surfaces of
genus $g$, $G$ a finite subgroup of the corresponding modular group $M_{g} $. Then:

(1) $\overline{\mathcal{M}}^{G,a}_{g}$ is a closed, irreducible algebraic
subvariety of $\mathcal{M}_{g}$.

(2) ${\mathcal{M}}_{g}^{G,a}$, if it is non-empty, is a smooth, connected,
locally closed algebraic subvariety of $\mathcal{M}_{g}$, Zariski dense in
$\overline{\mathcal{M}}_{g}^{G,a}$.

There are finitely many strata ${\mathcal{M}}_{g}^{G,a}$.
\end{theorem}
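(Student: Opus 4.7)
The plan is to work upstairs in the Teichm\"uller space $T_{g}$ and exploit the inclusion $i_{a}\colon T_{s}\hookrightarrow T_{g}$ introduced just before the theorem. Given the topological action $a$ of $G$, its image in the modular group $M_{g}$ is a finite subgroup $\widetilde{G}$, and the first step is to identify $i_{a}(T_{s})$ with the fixed-point set $T_{g}^{\widetilde{G}}$. The inclusion $i_{a}(T_{s})\subseteq T_{g}^{\widetilde{G}}$ is automatic since every marked surface in $i_{a}(T_{s})$ carries a holomorphic $G$-action realizing $a$; the reverse inclusion is the content of Kerckhoff's solution to the Nielsen realization problem (any finite subgroup of $M_{g}$ fixes a point of $T_{g}$). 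Because $T_{s}$ is a simply connected complex manifold of dimension $3h-3+k$ and $i_{a}$ is holomorphic and injective, $T_{g}^{\widetilde{G}}$ is a connected smooth complex submanifold of $T_{g}$.

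Next I would pass to the quotient. The normaliser $N=N_{M_{g}}(\widetilde{G})$ acts properly discontinuously on $T_{g}^{\widetilde{G}}$, and by definition $\overline{\mathcal{M}}_{g}^{G,a}$ is the image of $i_{a}(T_{s})$ under $T_{g}\to\mathcal{M}_{g}$, which equals $T_{g}^{\widetilde{G}}/N$. This is a connected, closed complex subvariety of $\mathcal{M}_{g}$; irreducibility follows from connectedness of $i_{a}(T_{s})$, and algebraicity is inherited from the quasi-projective structure on $\mathcal{M}_{g}$. For part (2) I would describe $\mathcal{M}_{g}^{G,a}$ as the locus in $T_{g}^{\widetilde{G}}/N$ where the full automorphism group realizes $a$ and nothing strictly larger; upstairs this is the complement inside $i_{a}(T_{s})$ of the union of the images $i_{a'}(T_{s'})$ corresponding to the finitely many conjugacy classes of finite subgroups $\widetilde{G}'\supsetneq\widetilde{G}$ of $M_{g}$. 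Each such $i_{a'}(T_{s'})$ is a proper closed complex submanifold of strictly smaller dimension (its Fuchsian group $\Delta'$ properly contains $\Delta$, so by Theorem \ref{subgroupthm} the signature has a lower-dimensional Teichm\"uller space). Removing this closed proper subset yields a smooth connected Zariski-open and Zariski-dense subset upstairs; taking the quotient by $N$ preserves these properties.

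The main obstacle, and the most combinatorial step, is the finiteness of the stratification. I would bound the data in three layers: by Hurwitz's bound $|G|\le 84(g-1)$ there are finitely many abstract groups $G$ that can act; for each such $G$, the Riemann--Hurwitz equation admits only finitely many signatures $s=(h;m_{1},\dots,m_{k})$ of a Fuchsian group $\Delta$ that could cover it; for each such signature the set of surface-kernel epimorphisms $\theta\colon\Delta\to G$ is finite, and two such epimorphisms give topologically equivalent actions precisely when they differ by an element of $\mathrm{Aut}(\Delta)\times\mathrm{Aut}(G)$. Since each of these groups is finite on the relevant set, the total number of equisymmetric strata is finite, completing the proof.
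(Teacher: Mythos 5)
The paper offers no proof of this statement: it is quoted verbatim from Broughton \cite{[B]}, so there is no internal argument to compare yours against. Judged on its own terms, your outline follows the standard Harvey--Broughton route (identify $i_{a}(T_{s})$ with a fixed-point locus in $T_{g}$, pass to the quotient, carve out the stratum as the complement of the loci of larger actions, and count actions via Hurwitz's bound plus finiteness of signatures and of surface-kernel epimorphisms), and most of it is sound. One small imprecision: the reverse inclusion $T_{g}^{\widetilde{G}}\subseteq i_{a}(T_{s})$ is not Kerckhoff's Nielsen realization theorem --- that theorem gives non-emptiness of the fixed-point set; the fact that a finite subgroup of $M_{g}$ fixing a point of $T_{g}$ is realized holomorphically on the corresponding surface is the older statement coming from uniqueness of Teichm\"uller extremal maps (Harvey's setup), and it is the one you actually need here.

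The genuine gap is in your claim that each $i_{a'}(T_{s'})$ with $\widetilde{G}'\supsetneq\widetilde{G}$ is a proper closed submanifold of \emph{strictly smaller dimension}. This fails exactly on Singerman's list of non-maximal signatures \cite{[Si2]}: for instance a Fuchsian group of signature $(0;m,m,m,m)$ sits inside one of signature $(0;2,2,2,m)$ and both Teichm\"uller spaces have dimension $1$, so $i_{a'}(T_{s'})$ can coincide with $i_{a}(T_{s})$ rather than being a proper subvariety. In that situation \emph{every} surface with the action $a$ admits a strictly larger automorphism group, and ${\mathcal{M}}_{g}^{G,a}$ is empty --- which is precisely why the theorem carries the hypothesis ``if it is non-empty,'' and why the paper's own proof of Theorem \ref{pgonal} insists on $\Delta$ having maximal signature. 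Your argument should therefore split into two cases: either some $i_{a'}(T_{s'})$ exhausts $i_{a}(T_{s})$ (stratum empty, nothing to prove), or all such loci are proper closed analytic subvarieties, in which case the complement of their finite union in the connected manifold $i_{a}(T_{s})$ is indeed open, dense, connected and smooth, and descends as you describe. With that case distinction added, and with algebraicity justified by more than an appeal to the quasi-projective structure (Broughton's argument uses that these loci are images of algebraic families), the proof is complete.
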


An isolated stratum $\mathcal{M}^{G,a}$ in the above stratification is a
stratum that satisfies $\overline{\mathcal{M}}^{G,a}\cap\overline{\mathcal{M}
}^{H,b}=\varnothing$, for every group $H$ and action $b$ on surfaces of genus
$g$. Thus $\overline{\mathcal{M}}^{G,a}=\mathcal{M}^{G,a}$

Since each non-trivial group $G$ contains subgroups of prime order, we have
the following remark:

\begin{remark}
\label{primeremark}(\cite{[C]})
\[
\mathcal{B}_{g}=\underset{p\text{ prime}}{\bigcup}\overline{\mathcal{M}
}^{C_{p},a}
\]
\noindent where $\overline{\mathcal{M}}^{C_{p},a}$ is the set of Riemann
surfaces of genus $g$ with an automorphism group containing $C_{p}$, the
cyclic group of order $p$, acting on surfaces of genus $g$ in the topological
way given by $a$.
\end{remark}

%%%%%%%%%%%%%%%%%%%%%%%%%%%%%%%%%%%%%%%%%%%%%%%%%%%%%%%%%%%%%%%%%%%%%%%%%%
%%%%%%%%%%%%%%%%%%%%%%%%%%%%%%%%%%%%%%%%%%%%%%%%%%%%%%%%%%%%%%%%%%%%%%%%%%

\section{Isolated strata of $p$-gonal Riemann surfaces}

\begin{definition}
A Riemann surface $X$ is said to be \textit{$p$-gonal} if it admits
a $p$-sheeted covering $f:X\rightarrow\widehat{\mathbb{C}}$ onto the Riemann
sphere. If $f$ is a cyclic regular covering then $X$ is called cyclic
$p$-gonal. The covering $f$ will be called the (cyclic) $p$-gonal morphism.
\end{definition}

\noindent A cyclic $p$-gonal Riemann surface admits an equation of the form $y^p = P(x)$.

\noindent By Lemma 2.1\ in \cite{[A]}, if the surface $X_{g}$\ has genus $g\geq
(p-1)^{2}+1$, then the $p$-gonal morphism is unique.

\smallskip\noindent
We can characterize cyclic $p$-gonal Riemann surfaces using Fuchsian groups.
Let $X_{g}$ be a Riemann surface, $X_{g}$ admits a cyclic $p$-gonal morphism
$f$ if and only if there is a Fuchsian group $\Delta$ with signature
$(0;\overbrace{p,...,p}^{{\frac{2g}{p-1}}+2})$ and an index $p$ normal surface
subgroup $\Gamma$ of $\Delta$, such that $\Gamma$ uniformizes $X_{g}$ (See
\cite{[CI4]}, \cite{[CI]}). 

\smallskip\noindent
We have the following algorithm to recognize cyclic $p$-gonal surfaces: 
A surface $X_{g}$ admits a cyclic $p$-gonal morphism $f$
if and only if there is a Fuchsian group $\Delta$ with signature $
(0;m_{1},...,m_{r})$, an order $p$ automorphism $\alpha:X_{g}\rightarrow
X_{g}$, such that $\langle\alpha\rangle\leq G=Aut(X_g)$, and an epimorphism $
\theta:\Delta\rightarrow G$ with $ker(\theta)=\Gamma$ in such a way that $
\theta^{-1}(\langle\alpha\rangle)$ is a Fuchsian group with signature $(0;
\overbrace{p,...,p}^{{\frac{2g}{p-1}}+2})$. Furthermore the $p$-gonal morphism $f$ is unique
if and only if $\langle\alpha\rangle$ is normal in $G$ (see \cite{[G]}), and
Wootton \cite{[W]} has proved the following: 
\begin{lemma}\label {normal}(\cite{[W]})
With the notation above. If $G > C_p$, then $N_G(C_p) > C_p$.
\end{lemma}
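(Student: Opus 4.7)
The argument splits on whether $C_p$ is a Sylow $p$-subgroup of $G$. If it is not, then $C_p$ lies in a strictly larger $p$-subgroup $P$, and the classical fact that every proper subgroup of a $p$-group is strictly contained in its normalizer gives $N_P(C_p) \supsetneq C_p$, hence $N_G(C_p) \supsetneq C_p$. So I may assume $C_p$ is a Sylow $p$-subgroup of $G$ and argue by contradiction, supposing $N_G(C_p) = C_p$. Since $C_p$ is abelian, $C_p \leq C_G(C_p) \leq N_G(C_p) = C_p$, so $C_G(C_p) = C_p$, and Burnside's normal $p$-complement theorem yields $G = C_p \ltimes K$ with $K \triangleleft G$ and $\gcd(|K|, p) = 1$.

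Under this Burnside decomposition the $[G:C_p] = |K|$ Sylow $p$-subgroups intersect pairwise trivially, so $G \setminus K$ consists entirely of elements of order $p$. Partition the elliptic generators of $\Delta$ into $S_1 = \{i : \theta(x_i) \in K\}$ and $S_2 = \{i : \theta(x_i) \notin K\}$; then $m_i = p$ for every $i \in S_2$. Applying Theorem \ref{subgroupthm} to the coset action of $\Delta$ on $G/C_p$, generators in $S_1$ act with only full-length cycles (since $\theta(x_i)^n \in gC_pg^{-1}$ combined with $\gcd(\mathrm{ord}\,\theta(x_i),p)=1$ forces $\theta(x_i)^n = 1$), so they contribute no period to $\Delta_p = \theta^{-1}(C_p)$. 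For $i \in S_2$, the self-normalizing assumption makes the map $gC_p \mapsto gC_pg^{-1}$ a bijection $G/C_p \to \mathrm{Syl}_p(G)$, so $\theta(x_i)$ has exactly one fixed coset and all of its other cycles have length $p = m_i$, contributing a single period of order $p$. Matching against the signature $(0;p,\ldots,p)$ of $\Delta_p$ then forces $|S_2| = k = 2g/(p-1) + 2$.

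The same Singerman analysis applied to the normal index-$p$ subgroup $\Delta_K = \theta^{-1}(K)$, namely the kernel of the composition $\pi \circ \theta : \Delta \to C_p = G/K$, shows that only generators in $S_1$ contribute periods to $\Delta_K$, each giving $p$ copies of $m_i$. The Riemann-Hurwitz area identity $\mu(\Delta_K) = p\,\mu(\Delta)$ then collapses to
\[
2h' - 2 = -2p + (p-1)|S_2|,
\]
so the underlying genus of $\mathcal{D}/\Delta_K = X_g/K$ is $h' = (p-1)(|S_2|-2)/2 = g$. Riemann-Hurwitz for the degree-$|K|$ branched cover $X_g \to X_g/K$ now reads $2g - 2 = |K|(2g - 2) + R$ with $R \geq 0$; for $g \geq 2$ this forces $|K| = 1$, giving $G = C_p$ and contradicting $G > C_p$. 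The delicate step is the Singerman bookkeeping that pins down $|S_2| = k$ exactly: this uses the bijection $G/C_p \leftrightarrow \mathrm{Syl}_p(G)$, which is available only under the (for-contradiction) self-normalizing hypothesis, and once $h' = g$ is established the Riemann-Hurwitz obstruction to a nontrivial cover of equal genus closes the argument.
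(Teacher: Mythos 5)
The paper does not prove this lemma at all: it is quoted verbatim from Wootton \cite{[W]} and used as a black box, so there is no in-paper argument to compare against. Your proof is, as far as I can check, complete and correct, and it supplies exactly the kind of argument one finds in \cite{[W]}: the Sylow reduction plus the ``normalizers grow'' property of $p$-groups handles the non-Sylow case; Burnside's normal $p$-complement theorem (legitimately applicable since $C_G(C_p)=N_G(C_p)=C_p$ for an abelian self-normalizing Sylow subgroup) produces $G=C_p\ltimes K$; the trivial pairwise intersection of the $|K|$ Sylow $p$-subgroups correctly forces every element of $G\setminus K$ to have order $p$; and the two Singerman computations are sound --- for $\theta^{-1}(C_p)$ the coprimality of $\mathrm{ord}\,\theta(x_i)$ with $p$ for $i\in S_1$ and the bijection $gC_p\mapsto gC_pg^{-1}$ (injective precisely because $N_G(C_p)=C_p$) for $i\in S_2$ pin down $|S_2|=2g/(p-1)+2$, and the area identity $\mu(\Delta_K)=p\,\mu(\Delta)$ then gives the quotient genus $h'=g$, after which Riemann--Hurwitz for $X_g\to X_g/K$ kills $K$. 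Two small points worth making explicit if you write this up: you should note that $\mathrm{ord}\,\theta(x_i)=m_i$ exactly (not merely divides) because $\ker\theta$ is a surface group, which is what justifies both ``$m_i=p$ for $i\in S_2$'' and the full-length-cycle claim for $i\in S_1$; and the final step needs $g\ge 2$, which holds here since the surfaces in question are hyperbolic $p$-gonal surfaces. Neither is a gap, just bookkeeping to surface.
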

 
\noindent Isolated strata $\overline{\mathcal{M}}^{C_p,a}=\mathcal{M}^{C_p,a}$ of cyclic $p$-gonal surfaces correspond to 
maximal actions of the cyclic group $C_p$. 
Isolated strata of dimension 0 where given \cite{[K]}, isolated strata of dimension 1 were studied in \cite{[CI3]}. We find here isolated strata of any dimension, consisting of $p$-gonal surfaces also:

\begin{theorem}
\label{pgonal} Let $p$ be a prime number at least seven and let $d\ge 2$. Then 
there are isolated strata of dimension $d$ consisting of $p$-gonal surfaces in $\mathcal{B}_g$ if and only if $g=(d+1)({p-1\over 2})$.
\end{theorem}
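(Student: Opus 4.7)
The plan is to handle the two directions of the biconditional separately. For the implication ``there is an isolated $p$-gonal stratum of dimension $d$ $\Rightarrow g=(d+1)(p-1)/2$'', a $d$-dimensional cyclic $p$-gonal stratum is uniformized by a Fuchsian group with signature $(0;p,\ldots,p)$ having $k$ periods, whose Teichm\"{u}ller dimension is $k-3=d$, so $k=d+3$; Riemann--Hurwitz then gives $2g-2=-2p+k(p-1)$ and hence $g=(k-2)(p-1)/2=(d+1)(p-1)/2$. This direction is purely mechanical and would come first.

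The main work is the converse: given $g=(d+1)(p-1)/2$ with $d\ge 2$, $p\ge 7$, and $k=d+3$, one has to exhibit an epimorphism $\theta_a\colon\Delta\to C_p=\langle s\rangle$, $\theta_a(x_i)=s^{c_i}$, with $c_i\in(\mathbb{Z}/p)^*$ and $\sum c_i\equiv 0\pmod{p}$, whose associated stratum $\mathcal{M}^{C_p,a}$ is isolated. My first step would be to translate isolation into a combinatorial condition on $(c_1,\ldots,c_k)$. Suppose a surface in $\overline{\mathcal{M}}^{C_p,a}$ had an automorphism outside $C_p$; by Lemma~\ref{normal} we may take one that normalises $C_p$, so it descends to a finite-order $\bar\sigma\in PGL_2(\mathbb{C})$ permuting the $k$ branch points of the $p$-gonal morphism. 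The induced permutation $\pi\in S_k$ is forced to have only $n$-cycles and at most two fixed points, where $n=\mathrm{ord}(\bar\sigma)$, and conjugation acts on $C_p$ by $s\mapsto s^\lambda$ for some $\lambda\in(\mathbb{Z}/p)^*$, giving the compatibility
\[
c_{\pi(i)}\equiv\lambda\,c_i\pmod{p},\qquad i=1,\ldots,k.
\]
Conversely, Singerman's Theorem~\ref{subgroupthm} combined with the classification of finite subgroups of $PGL_2(\mathbb{C})$ shows that every Fuchsian inclusion $\Delta\subset\Delta'$ through which $\theta_a$ extends arises from a pair $(\pi,\lambda)$ of this form. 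Hence isolation of $\mathcal{M}^{C_p,a}$ is equivalent to: no non-trivial $(\pi,\lambda)$ of the allowed cycle type satisfies the displayed compatibility.

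The second step would be the explicit construction. When $\pi$ has at least one fixed index, the compatibility forces $\lambda=1$, so any non-trivial $n$-cycle of $\pi$ must be supported on $n$ positions with equal $c_i$; to kill every such $\pi$ it suffices to keep every value among the $c_i$ with low multiplicity and to prevent repeated values from sitting jointly on a realizable orbit. The remaining possibility, $\pi$ without fixed indices (only when $n\mid k$), is excluded by preventing non-trivial $\lambda$-orbit structures inside the multiset $\{c_1,\ldots,c_k\}$. Because $p\ge 7$ the group $(\mathbb{Z}/p)^*$ has at least six elements, which leaves enough room to build a vector with the desired multiplicity restrictions whose entries sum to $0\pmod{p}$; concretely, organising the construction by the residue $k\bmod p$ reduces it to a finite number of cases, generalising the pentagonal construction of \cite{[BCI1]} and the 0- and 1-dimensional constructions of \cite{[K]} and \cite{[CI3]}.

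The main obstacle will lie in this last step: carrying out the case analysis uniformly in $d$ and $p$. The delicate situations are those in which the arithmetic of residues forces some coincidence among the $c_i$---for example, if no tuple of $k$ distinct non-zero residues modulo $p$ sums to zero, then some repetition is unavoidable---and one has to argue that any forced coincidence cannot be promoted to a realizable permutation. The crucial observation here is that a single transposition, and more generally any permutation leaving at least three indices fixed, is not realizable by a finite-order element of $PGL_2(\mathbb{C})$ when $k\ge 5$; combined with a careful choice of residues to avoid long $\lambda$-orbits for $\lambda\neq 1$, this will exclude every non-trivial $(\pi,\lambda)$. Once an admissible vector $(c_1,\ldots,c_k)$ is produced in every case, one concludes $\overline{\mathcal{M}}^{C_p,a}=\mathcal{M}^{C_p,a}$, completing the theorem.
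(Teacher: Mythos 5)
Your plan follows essentially the same route as the paper: the ``only if'' direction via Riemann--Hurwitz and the Teichm\"{u}ller dimension of the signature $(0;p,\overset{d+3}{\dots},p)$, and the converse via Wootton's normalizer lemma (Lemma~\ref{normal}) to reduce to an automorphism descending to a finite-order M\"{o}bius transformation permuting the branch points, the compatibility condition $c_{\pi(i)}\equiv\lambda c_i$, and a case analysis on the residue of $d+3$ modulo $p$ to produce an exponent vector admitting no such symmetry. The only thing missing relative to the paper is the actual exhibition of the exponent vectors in each residue class (the paper lists five explicit monodromies, with almost all exponents equal to $1$ and three corrective exponents at the end); your justification of the restricted cycle type of $\pi$ is in fact more explicit than the paper's.
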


\begin{proof} First of all, an equisymmetric stratum $\mathcal{M}^{C_p,a}$ in $\mathcal{B}_p$ of dimension $d\ge 2$ of $p$-gonal Riemann surfaces is given by a monodromy 
$\theta: \Delta(0; \overbrace{p, \dots, p}^{d+3}) \to C_p$, with $\Delta$ a Fuchsian group with maximal signature (see \cite{[Si2]}). Then, a generic surface $X$ in $\mathcal{M}^{C_p,a}$ will have $C_p= Aut(X)$. The dimension of the stratum is $d={2g-p+1\over p-1}$ by the Riemann-Hurwitz formula Thus $g=(d+1)({p-1\over 2})$. 

\noindent If a surface in the stratum
has larger automorphism group $G$, then, by Lemma \ref{normal}, we can assume that $C_p$ is normal in $G$ by considering $C_p < N_G(C_p)$.
%We will prove that we can choose monodromies $\theta: \Delta(0; \overbrace{p, \dots, p}^{d+3}) \to C_p$ in such a way that there is no surface in the closure if the stratum having a larger automorphism group. 

\noindent Let $X_{g}$, be a $p$-gonal surface, such that
$X_{g}\in\overline{\mathcal{M}}_{g}^{C_{p},a}$ for some action $a$, let
$\langle\alpha\rangle$ be the group of $p$-gonal automorphisms of
$X_{g}$. Consider an automorphism $b\in Aut(X)\setminus\left\langle
\alpha\right\rangle $, by Lemma \ref{normal} and \cite{[G]}, $b$ induces an
automorphism $\hat{b}$ of order $t\ge 2$ on the Riemann sphere $X_{g}/\langle
a\rangle=\widehat{\mathbb{C}}$ according to the following diagram:

\begin{center}
$
\begin{array}
[c]{ccc}
X_{g}=\mathcal{D}/\Gamma_{g} & \overset{b}{\rightarrow} & X_{g}=\mathcal{D}
/\Gamma_{g}\\
f_{a}\downarrow &  & \downarrow f_{a}\\
X_{g}/\langle\alpha\rangle=\widehat{\mathbb{C}}(P_{1},\dots,P_{k}) & \overset{\hat{b}
}{\rightarrow} & X_{g}/\langle\alpha\rangle=\widehat{\mathbb{C}}(P_{1},\dots,P_{k})
\end{array}
$
\end{center}

\noindent where $\Gamma_{g}$ is a surface Fuchsian group and $f_{a}
:X_{g}=\mathcal{D}/\Gamma_{g}\rightarrow X_{g}/\langle\alpha\rangle$ is the $p$-gonal
morphism induced by the group of automorphisms $\langle\alpha\rangle$ with
action $a$. $S=\{P_{1},\dots,P_{k}\}$ is the branch set in $\widehat{\mathbb{C}}$ of the
morphism $f_{a}$ with monodromy $\theta_{a}:\Delta(0;p,\overset{d+3}{\dots
},p)\rightarrow C_{p}$ defined by $\theta_{a}(x_{i})=\alpha^{r_{i}}$, where
$r_{i}\in\{1,\dots,p-1\}$ for $1\leq i\leq d+3$. 
%Let $n_{j}$ denote the number of times that the exponents $j$ occurs among $t_{1},\dots,t_{k}$, for $1\leq
%j\leq4$. Then $n_{1}+n_{2}+n_{3}+n_{4}=k$ and $1n_{1}+2n_{2}+3n_{3}+4n_{4}\equiv0\ mod\ 5$.

Now, $\hat{b}$ induces a permutation on $S$ that either takes singular points
with monodromy $\alpha^{j}$ to points with monodromy $\alpha^{\beta(j)}$, with $\beta$ an automorphism of $C_p$, or
it acts on each subset formed by points in $S$ with same monodromy
$\alpha^{r_{j}}$. 
%\begin{align}
%1. &  |n_{1}-n_{4}|+|n_{2}-n_{3}|\geq3+h,\nonumber\\
%2. &  |n_{1}-n_{j}|\geq3+h,\text{ for some }n_{j}\text{ such that }2\leq
%j\leq4\text{ and}\label{cond}\\
%3. &  \text{let }\widehat{n_{j}}\in\{1,2,3,4\}\text{, such that }%
%\widehat{n_{j}}\equiv n_{j}\operatorname{mod}p\text{, then }\sum_{j=1}%
%^{4}\widehat{n_{j}}\geq3+h.\nonumber
%\end{align}
%\begin{enumerate}\label{cond}
%\item $|n_1-n_4| +|n_2-n_3|\ge 3+h$,
%\item $|n_1-n_j| \ge 3+h$, for some $n_{j}$ such that $2\le j\le 4$ and
%\item $\sum{n_j\ mod\ p}\ge 3+h$.
%\end{enumerate}

We construct monodromies $\theta:\Delta(0;p,\overset{d+3}{\dots
},p)\rightarrow C_{p} =\langle\alpha\rangle$, where $d=\frac{2g}{p-1}-2 \ge 2$ by the Riemann-Hurwitz
formula. We separate the monodromies in cases according to the congruence of $d$ modulus $p$.
\begin{enumerate}
\item 
$d \equiv r \not\equiv 0, 2, p-2, p-1 \,  mod(p)$ 

$\theta:\Delta(0;p,\overset{d+3}{\dots},p)\rightarrow C_{p}$ is defined by 

$\theta(x_i)=\alpha, \, 1\le i\le d; \, \theta(x_{d+1})=\alpha^{2}; \, \theta(x_{d+2})=\alpha^{p-2}; \, \theta(x_{d+3})=\alpha^{p-r}$.

\item 
$d \equiv 0 \, mod(p)$ 

$\theta:\Delta(0;p,\overset{d+3}{\dots},p)\rightarrow C_{p}$ is defined by 

$\theta(x_i)=\alpha, \, 1\le i\le d; \, \theta(x_{d+1})=\alpha^{3}; \, \theta(x_{d+2})=\alpha^{5}; \, \theta(x_{d+3})=\alpha^{p-8}$.

\item 
$d \equiv 2 \, mod(p)$ 

$\theta:\Delta(0;p,\overset{d+3}{\dots},p)\rightarrow C_{p}$ is defined by 

$\theta(x_i)=\alpha, \, 1\le i\le d; \, \theta(x_{d+1})=\alpha^{3}; \, \theta(x_{d+2})=\alpha^{p-3}; \, \theta(x_{d+3})=\alpha^{p-2}$.

\item 
$d \equiv p-2 \, mod(p)$ 

$\theta:\Delta(0;p,\overset{d+3}{\dots},p)\rightarrow C_{p}$ is defined by 

$\theta(x_i)=\alpha, \, 1\le i\le d; \, \theta(x_{d+1})=\alpha^{3}; \, \theta(x_{d+2})=\alpha^{p-3}; \, \theta(x_{d+3})=\alpha^{2}$.

\item 
$d \equiv p-1 \, mod(p)$ 

$\theta:\Delta(0;p,\overset{d+3}{\dots},p)\rightarrow C_{p}$ is defined by 

$\theta(x_i)=\alpha, \, 1\le i\le d; \, \theta(x_{d+1})=\alpha^{4}; \, \theta(x_{d+2})=\alpha^{5}; \, \theta(x_{d+3})=\alpha^{p-8}$.
\end{enumerate}
(Notice that $p-8=6$ when $p=7$ in cases 2 and 5)
%\[
%\begin{tabular}
%[c]{c|c|cccc}
%$g\ mod\ 5$ & $k\ mod\ 5$ & $n_{1}$ & $n_{2}$ & $n_{3}$ & $n_{4}$\\\hline
%$g\equiv0\ mod\ 5$ & $k\equiv2\ mod\ 5$ & $(k-13)$ & $5$ & $1$ & $7$\\\hline
%$g\equiv1\ mod\ 5$ & $k\equiv0\ mod\ 5$ & $(k-7)$ & $5$ & $1$ & $1$\\\hline
%$g\equiv2\ mod\ 5$ & $k\equiv3\ mod\ 5$ & $(k-9)$ & $1$ & $3$ & $5$\\\hline
%$g\equiv3\ mod\ 5$ & $k\equiv1\ mod\ 5$ & $(k-7)$ & $1$ & $5$ & $1$\\\hline
%$g\equiv4\ mod\ 5$ & $k\equiv4\ mod\ 5$ & $(k-9)$ & $5$ & $1$ & $3$%
%\end{tabular}
%\]

\noindent We see that the given epimorphisms force $\hat{b}$ to be the
identity on $\widehat{\mathbb{C}}$. Thus, the surfaces $X_g$ do not admit 
a larger group of automorphisms than $C_p=\langle\alpha\rangle$ and the equisymmetric 
strata given by the monodromies above are isolated.
\end{proof}

Theorem \ref{pgonal} generalizes de results obtained in \cite{[BCI1]} for 
isolated strata of pentagonal Riemann surfaces,  the results in \cite{[CI3]} for one-dimensional isolated strata,  and the results in \cite{[K]} for isolated Riemann surfaces. Kulkarni \cite{[K]} showed that a branch locus $\mathcal{B}_g$ contains isolated Riemann surfaces if and only if $g=2$ or $g={p-1\over 2}$, with $p\ge 11$ a prime number. The isolated Riemann surfaces are cyclic $p$-gonal surfaces. Costa and Izquierdo \cite{[CI3]} showed that $\mathcal{B}_g$ contains one-dimensional isolated strata if and only if $g=p-1$, with $p\ge 11$ a prime number.

\begin{remark}
The isolated strata of heptagonal surfaces with dimension ${g\over 3}-1$ in $\mathcal{B}_{g}$ obtained here are 
different of the isolated strata of heptagonal surfaces and dimension ${g\over 3}-1$ obtained in \cite{[BCI2]} since the
actions determined by the monodromies are not topologically
equivalent, see \cite{[H]}.
\end{remark}

\smallskip\noindent 
In \cite {[BCI1]} we showed that $\mathcal{B}_g$ contains isolated strata of cyclic pentagonal surfaces for all even genera greater or equal eighteen. In \cite{[BI]} (see also \cite{[Bo]} and \cite{[BCIP]}) it is shown that the $\mathcal{B}_2$ contains one isolated pentagonal Riemann surface and that $\mathcal{B}_4, \, \mathcal{B}_6$ and $\mathcal{B}_8$ do not contain isolated strata of pentagonal Riemann surfaces. We study the remaining branch loci in the following proposition:

\begin{proposition}\label{five}
\end{proposition}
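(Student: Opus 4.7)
The plan is to address the four remaining even genera $g \in \{10, 12, 14, 16\}$ one by one, reusing the rigidity framework of Theorem \ref{pgonal}. For each I encode a candidate monodromy $\theta : \Delta(0; 5, \overset{d+3}{\dots}, 5) \to C_5$ by the multiset $(a, b, c, d)$ of multiplicities of $\alpha, \alpha^2, \alpha^3, \alpha^4$ among the $k = d+3$ images. The stratum is isolated exactly when (i) $a + 2b + 3c + 4d \equiv 0 \pmod 5$ and the images generate $C_5$, (ii) no non-trivial $\beta \in \mathrm{Aut}(C_5) \cong C_4$ preserves the multiset, and (iii) no non-identity Möbius transformation $\hat b$ of $\widehat{\mathbb{C}}$ of finite order $t$ realises a type-preserving permutation of the branch set. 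Condition (iii) becomes a divisibility check: if $f \in \{0, 1, 2\}$ of the two sphere-fixed points of $\hat b$ lie in the branch set, then $t \mid (k - f)$ and each per-type non-fixed count is divisible by $t$. Since no non-zero residue mod $5$ is fixed by any non-trivial $\beta$, the case of a $\hat b$ inducing $\beta \ne \mathrm{id}$ reduces to $\beta$-invariance of the multiset and is therefore covered by (ii).

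For the positive cases I propose the monodromies $(a,b,c,d) = (2,3,1,1)$ for $g = 10$, $(6,1,1,1)$ for $g = 14$ (the Case 1 monodromy of Theorem \ref{pgonal} with $d \equiv 1 \pmod 5$), and $(5,1,3,1)$ for $g = 16$. All three satisfy (i) and have full $\mathrm{Aut}(C_5)$-orbits of size four, so (ii) holds. For (iii), each multiset has at least three odd entries, which forces the fixed-point budget for an order-$2$ symmetry to exceed the allowed maximum of two; the higher orders $t$ are eliminated by inspecting the divisors of $k - f \in \{5,6,7\}$, $\{7,8,9\}$, $\{8,9,10\}$ respectively, together with the per-type divisibility constraints and the bound $f \le 2$.

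For $g = 12$ I would argue that no admissible multiset is isolated. Since $k = 8$ is even the number of odd entries in $(a, b, c, d)$ is $0$, $2$, or $4$, and an order-$2$ Möbius symmetry with $\beta = \mathrm{id}$ is realisable in the first two cases (take $f \in \{0, 2\}$ fixed branch points, one of each odd type if any); avoiding it thus forces all four entries odd. Enumerating partitions of $8$ into four odd positives leaves only $(5,1,1,1)$ and $(3,3,1,1)$ up to permutation, and imposing $a + 2b + 3c + 4d \equiv 0 \pmod 5$ singles out $(3,1,1,3)$ and $(1,3,3,1)$; but both are fixed by $\beta : \alpha \mapsto \alpha^{-1}$, producing an order-$2$ Möbius symmetry of Case $\beta \ne \mathrm{id}$ type (pairing $\{\alpha,\alpha^4\}$- and $\{\alpha^2,\alpha^3\}$-branch points) and violating (ii). The main obstacle will be this threefold squeeze --- sum mod $5$, parity, and $\mathrm{Aut}(C_5)$-rigidity --- that eliminates every candidate for $g = 12$.
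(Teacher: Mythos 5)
Your proposal is correct and follows the same overall strategy as the paper --- exhibit explicit $C_5$-monodromies for $g=10,14,16$ that pass the rigidity test of Theorem \ref{pgonal}, and show that every admissible monodromy for $g=12$ extends to a larger action --- but the two halves are organised differently enough to be worth comparing. For $g=10$ and $g=14$ your multisets $(2,3,1,1)$ and $(6,1,1,1)$ coincide, up to an automorphism of $C_5$, with the paper's $\theta_1$ and $\theta_2$; for $g=16$ your $(5,1,3,1)$ does \emph{not} coincide with the paper's $\theta_3$, whose images multiply to $\alpha^{1+2+9+20}=\alpha^{2}\neq 1$ and hence do not define a monodromy as printed --- your tuple satisfies the long relation and the same rigidity checks, so it in fact repairs a misprint. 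For $g=12$ the paper lists all nine admissible branch data up to equivalence and asserts an extension case by case, whereas you replace the enumeration by a three-fold squeeze (parity of the number of odd multiplicities, the congruence $a+2b+3c+4d\equiv 0 \pmod 5$, and invariance under $\alpha\mapsto\alpha^{-1}$) that reduces everything to the two tuples $(3,1,1,3)$ and $(1,3,3,1)$; I checked that this squeeze is exhaustive and catches all of the paper's cases. The one place where you are no more rigorous than the authors is the non-isolation direction: both arguments pass from a combinatorially admissible symmetry of the branch data to the existence of a surface in the stratum with strictly larger automorphism group without writing down the extended epimorphism onto $C_{10}$ or $D_5$ (e.g.\ onto a group of signature $(0;2,2,5,5,5,5)$ or $(0;10,10,5,5,5)$); this construction does go through uniformly in all your cases, but it is the step that should be made explicit.
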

\begin{enumerate}
\item
 $\mathcal{B}_{10}, \, \mathcal{B}_{14}$ and $\mathcal{B}_{16}$ contain isolated strata of cyclic pentagonal Riemann surfaces.
 \item
 $\mathcal{B}_{12}$ does not contain isolated strata of cyclic pentagonal Riemann surfaces.

\end{enumerate}

\begin{proof}
\begin{enumerate}
\item
\smallskip\noindent
Consider monodromies:

$\theta_1: \Delta(0,5, \overset{7}{\dots}, 5)\to C_5=\langle\alpha\rangle$ defined by 
 $\theta_1(x_1)=\theta_1(x_2)=\theta_1(x_3)=\alpha; \, \theta_1(x_4)=\alpha^2; \, \theta_1(x_5)=\theta_1(x_6)=\alpha^3; \, \theta_1(x_7)=\alpha^4$,

 $\theta_2: \Delta(0,5, \overset{9}{\dots}, 5)\to C_5=\langle\alpha\rangle$ defined by 
 $\theta_2(x_i)=\alpha, 1\le i\le 6; \, \theta_2(x_7)=\alpha^2; \, \theta_2(x_8)=\alpha^3; \, \theta_2(x_9)=\alpha^4$,
 
$\theta_3: \Delta(0,5, \overset{10}{\dots}, 5)\to C_5=\langle\alpha\rangle$ defined by 
$\theta_3(x_1)=\alpha; \, \theta_3(x_2)=\alpha^2; \, \theta_3(x_3)=\dots=\theta_3(x_5)\alpha^3; \, \theta_3(x_6)=\dots=\theta_3(x_{10})=\alpha^4$ 

 With the same argument as in Theorem \ref{pgonal} we see that $\theta_1, \, \theta_2$ and $\theta_3$ induce isolated strata in 
$\mathcal{B}_{10}, \, \mathcal{B}_{14}$ and $\mathcal{B}_{16}$ respectively.

\item Case $\mathcal{B}_{12}$. The only possible monodromies $\theta: \Delta(0,5, \overset{8}{\dots}, 5)\to C_5=\langle\alpha\rangle$ are, up to an automorphism of $C_5$ and permuting the order of the generators of $\Delta$: 

\textbf{i)} $\theta(x_1)=\dots=\theta(x_5)=\alpha; \, \theta(x_6)=\alpha^2; \, \theta(x_7)=\theta(x_8)=\alpha^4$; 

\textbf{ii)} $\theta(x_1)=\dots=\theta(x_5)=\alpha; \, \theta(x_6)=\alpha^4; \, \theta(x_7)=\theta(x_8)=\alpha^3$; 

\textbf{iii)} $\theta(x_1)=\dots=\theta(x_4)=\alpha; \, \theta(x_6)=\dots=\theta(x_8)=\alpha^4$;

\textbf{iv)} $\theta(x_1)=\dots=\theta(x_4)=\alpha; \, \theta(x_5)=\alpha^2; \, \theta(x_6)=\theta(x_7)=\theta(x_8)=\alpha^4$;

\textbf{v)} $\theta(x_1)=\dots=\theta(x_4)=\alpha; \, \theta(x_5)=\alpha^2; \, \theta(x_6)=\theta(x_7)=\theta(x_8)=\alpha^3$;

\textbf{vi)} $\theta(x_1)=\dots=\theta(x_4)=\alpha; \, \theta(x_5)=\theta(x_6)=\alpha^2; \, \theta(x_7)=\alpha^3; \, \theta(x_8)=\alpha^4$;

\textbf{vii)} $\theta(x_1)=\theta(x_2)=\theta(x_3)=\alpha; \, \theta(x_4)=\theta(x_5)=\theta(x_6)=\alpha^2; \, \theta(x_7)=\theta(x_8)=\alpha^3$;

\textbf{viii)} $\theta(x_1)=\theta(x_2)=\theta(x_3)=\alpha; \, \theta(x_4)=\alpha^2; \,\theta(x_5)=\alpha^3; \,\theta(x_6)=\theta(x_7)=\theta(x_8)=\alpha^4$;

\textbf{ix)} $\theta(x_1)=\theta(x_2)=\alpha; \, \theta(x_3)=\theta(x_4)=\alpha^2; \,\theta(x_5)=\theta(x_6)=\alpha^3; \,\theta(x_7)=\theta(x_8)=\alpha^4$.
\end{enumerate}

\noindent With the argument in the proof of Theorem \ref{pgonal} the action of $C_5$ on the pentagonal surfaces $\mathcal{D}/Ker(\theta)$ can be extended to the action of a larger group. For instance the action of $C_5$ in case \textbf{ix)} can be extended to an action of $C_{10}, \, D_5$ or $C_5\rtimes C_4$.
\end{proof}

\begin{remark}
Theorem \ref{pgonal} and Porposition \ref{five} can be interpreted geometrically as follows: Let $(X_g^1, C_p)$ and $(X_g^2, G)$ be two Riemann surfaces  with symmetry, where $X_1$ belongs to one of the isolated strata of cyclic $p$-gonal surfaces in $\mathcal{B}_g$ and $X_g^2$ has another symmetry. Then any path of quasiconformal deformations joining $X_g^1$ and $X_g^2$ must contain surfaces without symmetry.
\end{remark}

\medskip\noindent
We consider the existence of several isolated equisymmetric strata in branch loci. 
Let $5\le p_1< p_2 <\dots < p_r$ be prime numbers. We define $\lambda = l.c.m.({p_i-1\over 2})_{i=1}^{r}$.
As a consequence of Theorem \ref{pgonal} , Theorem 3.6 in \cite{[K]} and Theorem 5 in \cite{[CI3]} we obtain:

\begin{theorem}\label{multiprime}
Let $5\le p_1< p_2 <\dots < p_r$ be prime numbers. Then, for all $g=k\, \lambda$, $k\ge 1$ and $g > 12$, the branch locus 
$\mathcal{B}_g$ contains $r$ isolated strata formed by cyclic $p_i$-gonal Riemann surfaces, $1\le i\le r$. 
\end{theorem}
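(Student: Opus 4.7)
The plan is to construct, for each prime $p_i$ in the list, an isolated equisymmetric stratum of cyclic $p_i$-gonal Riemann surfaces inside $\mathcal{B}_g$, so that the $r$ resulting strata are automatically pairwise distinct (each has generic automorphism group $C_{p_i}$, and the primes are different). For every $p_i$ the corresponding stratum will be supplied by one of Theorem \ref{pgonal}, Proposition \ref{five}, or the external results \cite{[BCI1]}, \cite{[CI3]}, \cite{[K]}, depending on the value of $p_i$ and of the associated dimension.

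The first step is a divisibility remark: since $\lambda$ is the least common multiple of the integers $(p_i-1)/2$, each of these divides $\lambda$ and hence $g=k\lambda$, so that $d_i := \tfrac{2g}{p_i-1}-1$ is a non-negative integer. By Riemann--Hurwitz, $d_i$ is the Teichm\"uller dimension of a cyclic $p_i$-gonal equisymmetric stratum in $\mathcal{B}_g$. The second step is a case split on $(p_i,d_i)$. When $p_i\geq 7$ and $d_i\geq 2$, Theorem \ref{pgonal} provides the desired isolated stratum. When $p_i\geq 11$ and $d_i=1$, then $g=p_i-1$ and Theorem~5 of \cite{[CI3]} supplies a one-dimensional isolated stratum. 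When $p_i\geq 11$ and $d_i=0$, then $g=(p_i-1)/2$ and Theorem~3.6 of \cite{[K]} supplies an isolated Riemann surface. The case $p_i=7$ is automatically absorbed into the first sub-case, because $g>12$ together with $3\mid g$ forces $g\geq 15$ and thus $d_i\geq 4$.

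The case $p_i=5$ must be handled separately, since Theorem \ref{pgonal} requires $p\geq 7$. Here the hypothesis $g=k\lambda>12$ with $2\mid g$ yields $g\in\{14,16\}$ or $g$ even with $g\geq 18$; Proposition \ref{five} handles the former and the main theorem of \cite{[BCI1]} the latter. The only genuine (and mild) obstacle is this bookkeeping: one must confirm that these two sources together cover every admissible even $g>12$, and that the boundary dimensions $d_i\in\{0,1\}$ are only realised under conditions ($p_i\geq 29$ for $d_i=0$, $p_i\geq 17$ for $d_i=1$) that are compatible with the $p\geq 11$ assumption of \cite{[K]} and \cite{[CI3]}. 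Once those compatibility checks are made, collecting the $r$ constructed isolated strata yields the theorem.
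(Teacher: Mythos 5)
Your proposal is correct and follows essentially the same route as the paper: use the divisibility $(p_i-1)/2 \mid g$ to get an integer dimension $d_i=\frac{2g}{p_i-1}-1$ for each prime, then invoke Theorem \ref{pgonal} when $d_i\ge 2$ and $p_i\ge 7$, Theorem 5 of \cite{[CI3]} when $d_i=1$, Theorem 3.6 of \cite{[K]} when $d_i=0$, and Proposition \ref{five} together with \cite{[BCI1]} for $p_i=5$. Your write-up is in fact somewhat more careful than the paper's, which phrases the case split only in terms of the largest prime $p_r$ and leaves the pentagonal bookkeeping and the compatibility checks $p_i\ge 17$ (for $d_i=1$) and $p_i\ge 29$ (for $d_i=0$) implicit.
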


\begin{proof} Observe that the conditions of Theorem \ref{pgonal} are satisfied if $g\ge {3\over2}(p_r-1)$. The conditions of Theorem 5 in \cite{[CI3]} and Theorem \ref{pgonal} are satisfied if $g = p_r-1$. Finally the conditions of Theorem \ref{pgonal}, Theorem 5 in \cite{[CI3]} and Theorem 3.6 in \cite{[K]} are satisfied if $g = {p_r-1\over 2}$. The dimension of the isolated strata of
cyclic $p_i$-gonal surfaces is $d_i= {2g\over p_i-1}-1$ by the Riemann-Hurwitz formula.

$\mathcal{B}_{12}$ does not contain isolated strata of cyclic pentagonal Riemann surfaces, it contains isolated strata of cyclic heptagonal Riemann surfaces.
\end{proof}

\smallskip\noindent
As a consequence we have: 
\begin{corollary}
Given a number $r\in \mathbb{N}$, there is an infinite number of genera $g$ such that $\mathcal{B}_g$ contains at least $r$ isolated equisymmetric strata. 
\end{corollary}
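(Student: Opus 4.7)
The plan is to deduce the corollary immediately from Theorem \ref{multiprime}. Given $r \in \mathbb{N}$, I would first fix $r$ distinct primes $5 \le p_1 < p_2 < \cdots < p_r$ (for instance, the first $r$ primes that are at least $5$). Setting $\lambda = \mathrm{lcm}\bigl(\tfrac{p_i-1}{2} : 1 \le i \le r\bigr)$, Theorem \ref{multiprime} guarantees that for every integer $k \ge 1$ with $k\lambda > 12$, the branch locus $\mathcal{B}_{k\lambda}$ contains $r$ isolated equisymmetric strata, one consisting of cyclic $p_i$-gonal Riemann surfaces for each $i \in \{1,\dots,r\}$.

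These $r$ strata at a fixed genus $g = k\lambda$ are pairwise distinct, because they are equisymmetric classes whose associated automorphism groups contain cyclic groups of pairwise distinct prime orders $p_1,\dots,p_r$, hence the corresponding subgroups of the mapping class group cannot be conjugate. Since the arithmetic progression $\{k\lambda : k \ge 1,\ k\lambda > 12\}$ is infinite, we exhibit infinitely many genera at which $\mathcal{B}_g$ contains at least $r$ isolated strata.

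There is essentially no obstacle here: the work has already been done in Theorem \ref{pgonal} and its companion results from \cite{[K]} and \cite{[CI3]} packaged into Theorem \ref{multiprime}. The only points worth making explicit in the write-up are the choice of the primes $p_i$, the definition of $\lambda$, and the remark that primality of the $p_i$ ensures the $r$ strata are genuinely different equisymmetric classes rather than coincidences.
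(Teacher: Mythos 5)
Your proposal is correct and is essentially the paper's own argument: the corollary is stated there as an immediate consequence of Theorem \ref{multiprime}, obtained by choosing $r$ primes $p_1<\dots<p_r$ at least $5$ and observing that the set of admissible genera $\{k\lambda : k\ge 1,\ k\lambda>12\}$ is infinite. Your added remark that the $r$ strata are pairwise distinct because the prime orders $p_i$ differ is a sensible explicit justification of a point the paper leaves implicit.
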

\smallskip\noindent
We finish with some examples for small genera.

\subsection{Examples}
\begin{enumerate}
\item
By Theorem 5 in \cite{[CI3]} and Proposition \ref{five}, $\mathcal{B}_{10}$ contains one isolated stratum of cyclic pentagonal surfaces of dimension four, and one 1-dimensional startum of cyclic 11-gonal surfaces.

\item
By Theorem \ref{pgonal} and Theorem 5 in \cite{[CI3]}, the smallest genus for which the branch locus contains isolated strata of cyclic heptagonal and 13-gonal Riemann surfaces is twelve.The dimensions of the isolated strata are 3 and 1 respectively. 

\item
By Theorem \ref{multiprime}, $\mathcal{B}_{20}$ contains both isolated strata of cyclic pentagonal and 11-gonal Riemann surfaces. The dimensions of the isolated strata are 9 and 3 respectively. By \cite{[K]}, $\mathcal{B}_{20}$ contains 
isolated Riemann surfaces that are cyclic 41-gonal.

\item
The smallest genus for which the branch locus contains both isolated strata of cyclic heptagonal and 11-gonal Riemann surfaces is fifteen. The dimensions of the isolated strata are 3 and 2 respectively. By \cite{[K]}, $\mathcal{B}_{15}$ contains isolated Riemann surfaces that are cyclic 31-gonal.

\item
The smallest genus $g$ for which the branch locus $\mathcal{B}_g$ contains both isolated strata of cyclic pentagonal and heptagonal Riemann surfaces is eighteen. The dimensions of the strata are 8 and 5 respectively. It contains also isolated strata of cyclic 13-gonal Riemann surfaces of dimension 3. By \cite{[CI3]} and \cite{[K]}, $\mathcal{B}_{18}$ contains one-dimensional 
isolated strata of cyclic 19-gonal surfaces and isolated cyclic 37-gonal Riemann surfaces.

\item
By Theorem \ref{multiprime}, $\mathcal{B}_{24}$ contains isolated strata of cyclic pentagonal, heptagonal,  13-gonal and 17-gonal Riemann surfaces. The dimensions of the isolated strata are 11, 7, 3 and 2 respectively. 

\item
The smallest genus for which the branch locus contains isolated strata of cyclic pentagonal, heptagonal and 11-gonal Riemann surfaces is thirty, the dimensions of these strata are 14, 9 and 5 respectively. By Theorem \ref{multiprime}, $\mathcal{B}_{30}$ contains also isolated strata of cyclic 13-gonal Riemann, surfaces with dimension 4.  By \cite{[CI3]} and \cite{[K]}, $\mathcal{B}_{30}$ contains one-dimensional isolated strata of cyclic 31-gonal surfaces and isolated cyclic 61-gonal Riemann surfaces.

\item
$\mathcal{B}_{60}$ contains isolated strata of cyclic pentagonal, heptagonal, 11-gonal, 13-gonal, 31-gonal, 41-gonal, 61-gonal surfaces, with dimensions 29, 19, 11, 9, 3, 2 and 1 respectively.

\item
$\mathcal{B}_{1000}$ contains isolated strata of cyclic pentagonal, 11-gonal, 17-gonal, 41-gonal, 101-gonal, 251-gonal and 401-gonal surfaces, with dimensions 499, 199, 124, 49, 19, 7 and 4 respectively.

\item
$\mathcal{B}_{2012}$ contains 1005-dimensional isolated strata of cyclic pentagonal surfaces.

\end{enumerate}

\end{document}